\begin{document}
\title[\hfilneg Existence of positive solutions for a nonlinear third-order... \hfil ]
{Existence of Positive Solutions for a Nonlinear Third-order Integral Boundary Value Problem}

\author[C. Guendouz, F. Haddouchi, S. Benaicha]
{Cheikh Guendouz, Faouzi Haddouchi, Slimane Benaicha}

\address{Chiekh Guendouz \newline
 Laboratory of Fundamental and Applied Mathematics of Oran,\newline
Department of Mathematics, University of Oran 1 Ahmed Benbella,\newline
31000 Oran,
Algeria}
\email{guendouzmath@yahoo.fr}

\address{Faouzi Haddouchi \newline
Department of Physics, University of Sciences and Technology of
Oran-MB \newline
El Mnaouar, BP 1505, 31000 Oran, Algeria
\newline
Laboratory of Fundamental and Applied Mathematics of Oran,\newline
Department of Mathematics, University of Oran 1 Ahmed Benbella,\newline
31000 Oran,
Algeria}
\email{fhaddouchi@gmail.com}

\address{Slimane Benaicha \newline
Laboratory of Fundamental and Applied Mathematics of Oran,\newline
Department of Mathematics, University of Oran 1 Ahmed Benbella,\newline
31000 Oran,
Algeria}
\email{slimanebenaicha@yahoo.fr}

\subjclass[2010]{34B15, 34B18}
\keywords{Positive solutions, Krasnoselskii's fixed point theorem,
third-order integral boundary value problems, existence,
cone.}

\begin{abstract}
 In this paper, we study the existence of at least one positive solution for a nonlinear third-order two-point boundary value problem with integral condition. By employing the Krasnoselskii's fixed point theorem on cones, the existence results of the problem are established.
\end{abstract}

\maketitle
\numberwithin{equation}{section}
\newtheorem{theorem}{Theorem}[section]
\newtheorem{lemma}[theorem]{Lemma}
\newtheorem{remark}[theorem]{Remark}
\newtheorem{definition}[theorem]{Definition}
\newtheorem{example}[theorem]{Example}
\allowdisplaybreaks

\section{Introduction}
The theory of boundary value problems is experiencing a rapid development. Many methods are used to study this kind of problems such as fixed point theorems, shooting method, iterative method with upper and lower solutions, etc. Third-order differential equation arise in a variety of different areas of applied mathematics and physics, as the deflection of a curved beam having a constant or varying cross, three layer beam, electromagnetic waves or gravity driven flows and so on \cite{Greg}.

Recently, third-order two-point or multipoint boundary value problems have attracted a lot of attention \cite{And,Agar,Bai,Feng1,Feng2,Gros,Guo,Li,Luan,Rach,Yao,Zhao}, and the references therein for related results. Among them, the fixed-point principle in cone has become an important tool used in the study of existence and multiplicity of positive solutions. Many papers that use this method have been published in recent years.\\
In this paper, we are concerned with the following third-order BVP with integral boundary condition
\begin{equation}\label{eq1}
u^{\prime \prime \prime }(t) + f(u(t)) = 0, \ t \in (0,1),
\end{equation}
\begin{equation}\label{eq2}
u(0) = u^{\prime}(0) = 0, \ u(1) = \int_{0}^{1}g(s)u(s)ds,
\end{equation}
where

\begin{itemize}
\item[(H1)] $f\in C([0,\infty),[0,\infty))$;
\item[(H2)] $g\in C([0,1],[0,\infty))$ and $ 0< \int_0 ^1 t^{2} g(t)dt<1.$
\end{itemize}

It is worth mentioning that, in 2013, Francisco. J. Torres \cite{Tor} studied the existence of positive solutions for the following nonlinear third-order three point boundary problem
\begin{equation*}
u^{\prime \prime \prime}(t) + a(t)f(t,u(t)) = 0, \ t \in (0,1),
\end{equation*}
\begin{equation*}
u(0)= 0,\ u^{\prime}(0) = u^{\prime}(1) = \alpha u(\eta).
\end{equation*}
Existence of at least one and two positive solutions are obtained by means of Krasnoselskii's fixed
point theorem and the fixed point index theory.

In \cite{Sun1}, the authors considered the following third-order boundary value problem with integral boundary conditions

\begin{equation*}
u^{\prime \prime \prime}(t) +f(t,u(t),u^{\prime}(t)) = 0, \ t \in [0,1],
\end{equation*}
\begin{equation*}
u(0) = u^{\prime}(0) = 0, \ u^{\prime}(1) = \int_{0}^{1}g(s)u^{\prime}(s)ds,
\end{equation*}
where $f$ and $g$ are continuous functions. By using the Krasnoselskii fixed-point theorem, some sufficient conditions are obtained for the existence and nonexistence of positive solution for the above problem.

In a recent paper \cite{Niu}, by using the well-known Avery-Henderson two fixed-
point theorem, B. W. Niu, J. P. Sun and Q. Y. Ren proved the existence of at least
two positive and decreasing solutions for the following third-order boundary value problem with integral boundary condition

\begin{equation*}
u^{\prime \prime \prime}(t)=f(t,u(t)), \ t \in [0,1],
\end{equation*}
\begin{equation*}
u^{\prime}(0) =u(1)= 0, \ u^{\prime \prime}(\eta)+ \int_{\alpha}^{\beta}u(s)ds=0,
\end{equation*}
where $1/2<\alpha\leq \beta \leq 1$, $\alpha+\beta\leq 4/3$ and $\eta\in (1/2,\alpha]$.

In \cite{Sun2}, Y. Sun, M. Zhao, and S. Li considered the following nonlinear third-order two-point boundary value problem
\begin{equation*}
u^{\prime \prime \prime}(t) + f(u(t)) = 0,\  t \in (0,1),
\end{equation*}
\begin{equation*}
u(0)=u^{\prime\prime}(0) = u^{\prime}(1) =0.
\end{equation*}
Under suitable assumptions on $f$ and by employing a fixed point theorem of cone expansion and
compression of functional type due to Avery, Anderson and Krueger, they established the existence
of at least one positive solution of the above boundary value problem.

Very recently, in 2016, S. Benaicha and F. Haddouchi \cite{Ben} investigated existence of positive solutions of the fourth-order integral boundary value problem
\begin{equation*}
u^{\prime \prime \prime \prime}(t) + f(u(t)) = 0,\  t \in (0,1),
\end{equation*}
\begin{equation*}
u^{\prime} (0) = u^{\prime} (1) =u^{\prime \prime}(0) = 0,\ u(0) = \int_{0}^{1}a(s)u(s)ds.
\end{equation*}
By using the Krasnoselskii's  fixed point theorem, results on existence of positive solutions
are presented.

And, in the same year, inspired greatly by \cite{Tor}, Ali Rezaiguia and Smail Kelaiaia \cite{Rez} investigated the following third-order three point boundary value problem
\begin{equation*}
u^{\prime \prime \prime}(t) +a(t) f(t,u(t)) = 0,\  t \in (0,1),
\end{equation*}
\begin{equation*}
u^{\prime}(0) = u^{\prime}(1) = \alpha u(\eta),\ u(0) = \beta u(\eta).
\end{equation*}
By using Krasnoselskii's fixed point theorem in cones, they generalized the work of Torres \cite{Tor}.

Inspired and motivated by the above recent works, we intend in this paper
to study the existence of at least one positive solution for \eqref{eq1} and \eqref{eq2}  if the nonlinearity $f$ is either superlinear or sublinear.

The rest of this paper is arranged as follows:\\
In Section 2, we present the necessary definitions
and we give some lemmas in order to prove our main results. In particular, we state some properties of the Green's function associated with BVP \eqref{eq1} and \eqref{eq2}. In Section 3, some sufficient conditions are established for the existence of positive solution to our BVP when $f$ is superlinear or sublinear. Finally, two examples are also included to illustrate the main results.\\

\section{Preliminaries}
At first, we consider the Banach space $C([0,1])$  equipped with the sup norm \[\|u\|=\\sup_{t\in[0, 1]}|u(t)|.\]
\begin{definition}
Let $E$ be a real Banach space. A nonempty, closed, convex set $
K\subset E$ is a cone if it satisfies the following two conditions:
\begin{itemize}
\item[(i)]
 $x\in K$, $\lambda \geq 0$ imply $\lambda x\in K$;
\item[(ii)]
$x\in K$, $-x\in K$ imply $x=0$.
\end{itemize}
\end{definition}

\begin{definition}
An operator $T:E\rightarrow E$ \ is completely continuous if it is continuous
and maps bounded sets into relatively compact sets.
\end{definition}
\begin{definition}
$ A $ function $ u(t)$ is called a positive solution of \eqref{eq1} and \eqref{eq2}
if $ u \in C ([0,1]) $ and $ u(t) > 0 $ for all $ t \in (0,1)$
\end{definition}

To prove our results, we need the following well-known fixed point theorem
of cone expansion and compression of norm type due to Krasnoselskii \cite{Kras}.

\begin{theorem}\label{T1}
Let $E$ be a Banach space, and let $K\subset E$, be a cone. Assume that $%
\Omega_{1}$ and $\Omega_{2}$ are open subsets of $E$ with $0\in \Omega _{1}$,
$\Omega _{1}\subset \Omega_{2}$ and let
\[
A:K\cap (\overline{
\Omega_{2}}\backslash \Omega_{1})\rightarrow K
\]
be a completely continuous operator such that
\begin{itemize}
\item[(a)]
$\left\Vert Au\right\Vert \leq \left\Vert u\right\Vert ,$ $u\in K\cap
\partial
\Omega _{1}$, and $\left\Vert Au\right\Vert \geq \left\Vert u\right\Vert ,$
$u\in K\cap \partial \Omega_{2}$; or
\item[(b)]
$\left\Vert Au\right\Vert \geq \left\Vert u\right\Vert ,$ $u\in K\cap
\partial
\Omega_{1}$, and \ $\left\Vert Au\right\Vert \leq \left\Vert u\right\Vert ,$
$u\in K\cap \partial \Omega_{2}.$
\end{itemize}
Then $A$ has a fixed point in $K\cap (\overline{\Omega _{2}}$ $\backslash $ $
\Omega_{1})$.
\end{theorem}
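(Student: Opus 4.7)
The plan is to prove Theorem \ref{T1} via the fixed point index for completely continuous self-maps of cones, which can be developed from the Leray--Schauder degree by combining a retraction from $E$ onto $K$ with a Dugundji-type extension of $A$. Write $U_i := K \cap \Omega_i$ for $i=1,2$. Since $A$ is defined only on the annular region $K \cap (\overline{\Omega_2}\setminus \Omega_1)$, I would first extend it continuously to $K \cap \overline{\Omega_2}$ while preserving both compactness and the cone-valued property; any fixed point of the extension lying in $U_1 \cap \overline{\Omega_2}\setminus\Omega_1$ is then a genuine fixed point of $A$ on the annulus. Assume throughout that $A$ has no fixed points on $K \cap \partial\Omega_1$ or on $K \cap \partial\Omega_2$, since otherwise the conclusion already holds.

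I would handle case (a) in detail; case (b) is then symmetric. The argument reduces to two index computations: showing $i(A, U_1, K)=1$ under $\|Au\|\le\|u\|$ on $K\cap\partial\Omega_1$, and $i(A, U_2, K)=0$ under $\|Au\|\ge\|u\|$ on $K\cap\partial\Omega_2$. For the first, I would use the contraction homotopy $H(t,u):=tAu$ for $t\in[0,1]$; any would-be fixed point $u=tAu$ on $K\cap\partial\Omega_1$ forces $\|u\|=t\|Au\|\le t\|u\|$ and hence $t=1$, contradicting the standing assumption. By homotopy invariance together with normalization (using $0\in\Omega_1$), this index equals $i(0,U_1,K)=1$. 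For the second, I would fix $e\in K\setminus\{0\}$ and deform via $H(t,u):=Au+te$, $t\ge 0$. Because $Au\in K$ and $te\in K$, and $\|Au\|\ge\|u\|$ on the boundary, a direct estimate shows that for $t$ large enough the equation $u=Au+te$ admits no solution in $\overline{U_2}$, hence the translated operator has index $0$; checking that no fixed points appear on $K\cap\partial\Omega_2$ along the way then transfers this value back to $A$ by homotopy invariance.

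With these two values in hand, additivity of the fixed point index on the open annular region $U_2\setminus\overline{U_1}$ yields
\[
i\bigl(A,\, U_2\setminus\overline{U_1},\, K\bigr) \;=\; i(A,U_2,K)\;-\;i(A,U_1,K)\;=\;-1,
\]
and the nonvanishing-index existence property produces a fixed point of $A$ in $K\cap(\overline{\Omega_2}\setminus\Omega_1)$, as required. Case (b) is obtained by swapping the roles of $\Omega_1$ and $\Omega_2$ in the two computations, giving index $+1$ on the annulus and again a fixed point.

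The main obstacle is verifying that the translation homotopy used for $i(A,U_2,K)=0$ remains fixed-point-free on $K\cap\partial\Omega_2$ for every $t\ge 0$ and that $u=Au+te$ has no solution in $\overline{U_2}$ when $t$ is large. If $u=Au+te$ with $u\in\overline{U_2}$, rearrangement gives $Au=u-te$, and controlling the norm of this combination inside the cone is the delicate step; the typical route invokes the cone ordering $\le_K$ to deduce $u\ge_K te$, whence normality of $K$ (which is automatic in the function spaces of interest, such as $C([0,1])$ with its positive cone) yields $\|u\|\ge c\,t\,\|e\|$ and forces a bound on $t$. Once this estimate is in place, the rest of the proof is a direct assembly of the two index computations and additivity.
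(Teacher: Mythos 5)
The paper does not prove Theorem \ref{T1} at all: it is quoted as Krasnoselskii's classical cone compression--expansion theorem with a citation to \cite{Kras}, so your argument has to stand on its own. Within your sketch, the first index computation, $i(A,U_1,K)=1$ via the homotopy $tAu$ and normalization at $0\in\Omega_1$, is correct, and the extension/additivity bookkeeping is standard (Dugundji's extension keeps values in the closed convex set $K$, so $K$-valuedness survives). The genuine gap is the second computation, $i(A,U_2,K)=0$. The translation homotopy $H(t,u)=Au+te$ yields index zero under the boundary condition ``$u-Au\neq te$ for all $t\geq 0$ and $u\in K\cap\partial\Omega_2$'', and that condition is \emph{not} implied by the norm hypothesis $\|Au\|\geq\|u\|$. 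It can genuinely fail: in $E=\mathbb{R}^2$ with the max norm, $K$ the closed first quadrant, $\Omega_2$ the open unit ball, and $A(x,y)=(\max(x,y),0)$ (so $\|Au\|=\|u\|$ on all of $K$), take $u=(1,\tfrac12)$ and $e=(0,1)$; then $u\in K\cap\partial\Omega_2$ and $u=Au+\tfrac12 e$. So your homotopy can acquire fixed points on $K\cap\partial\Omega_2$ at intermediate parameter values, and homotopy invariance is unavailable.

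Your own proposed repair does not close this. Normality of the cone turns $u\geq_K te$ into an \emph{upper} bound on $t$, i.e.\ it excludes large $t$, whereas admissibility requires that no $t$ in the whole deformation interval produce a boundary solution; ruling out large $t$ is the wrong direction. Moreover, the theorem is stated for an arbitrary cone in an arbitrary Banach space, so normality is not a hypothesis you may invoke. As written, your argument proves the ``$e$-type'' (Leray--Schauder boundary condition) expansion theorem, which is a different statement from the norm-type theorem asserted here. To handle the norm condition on $K\cap\partial\Omega_2$ you need a different deformation --- the admissible one there is $H(\lambda,u)=\lambda Au$, $\lambda\geq 1$ (if $\lambda Au=u$ with $\lambda>1$ then $\|u\|=\lambda\|Au\|\geq\lambda\|u\|>\|u\|$, using $\|u\|>0$ on $\partial\Omega_2$ since $0\in\Omega_1\subset\Omega_2$) --- combined with a separate argument that the deformed operator has index zero; see the proof of Theorem 2.3.4 in Guo and Lakshmikantham, \emph{Nonlinear Problems in Abstract Cones}, or Krasnoselskii's original, index-free construction in \cite{Kras}. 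That step is genuinely nontrivial and is precisely what is missing from the proposal.
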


Consider the two-point boundary value problem

\begin{equation}\label{eq3}
u^{\prime \prime \prime}(t)+ h(t) = 0,\  t \in (0,1),
\end{equation}
\begin{equation}\label{eq4}
u(0) = u^{\prime}(0) = 0, \ u(1) = \int_{0}^{1}g(s)u(s)ds.
\end{equation}

\begin{lemma}\label{l1}
The problem \eqref{eq3}-\eqref{eq4} has a unique solution which can be expressed by
$$ u(t) = \int_{0}^{1} G(t,s) h(s) ds + \frac{t^{2}}{1-\mu} \int_{0}^{1} g(\tau)\left[ \int_{0}^{1}G(\tau,s)h(s)ds\right]d\tau ,$$
where $ G(t,s) : [0,1] \times [0,1]\rightarrow \mathbb{R}$ is the Green's function defined by
\begin{equation}\label{eq5}
G(t,s)=\frac{1}{2} \begin{cases}s(1-t)(2t-ts-s), & 0 \leq s \leq t \leq 1; \\
(1-s)^2 t^2, & 0 \leq t \leq s \leq 1,
\end{cases}
\end{equation}
and $ \mu = \int_{0}^{1} t^{2}g(t)dt$.
\end{lemma}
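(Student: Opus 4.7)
The plan is to solve \eqref{eq3}-\eqref{eq4} by direct integration. First, integrating $u'''(t) = -h(t)$ three times and applying the two initial conditions $u(0) = u'(0) = 0$ eliminates two of the three constants of integration; using the Cauchy repeated-integration formula, the result can be written as
\[
u(t) = -\int_0^t \frac{(t-s)^2}{2} h(s)\,ds + \frac{C t^2}{2},
\]
for some constant $C = u''(0)$. I would then impose the nonlocal condition $u(1) = \int_0^1 g(s) u(s)\,ds$ to express $C$ in terms of $h$ and the unknown scalar $A := \int_0^1 g(s) u(s)\,ds$, obtaining $C = \int_0^1 (1-s)^2 h(s)\,ds + 2A$.

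Next, I would substitute this value of $C$ back into the formula for $u(t)$ and split the resulting expression into integrals over $[0,t]$ and $[t,1]$. The Green's function \eqref{eq5} then emerges through the algebraic identity
\[
\tfrac{1}{2}\bigl[t^2(1-s)^2 - (t-s)^2\bigr] = \tfrac{1}{2}\, s(1-t)(2t-ts-s), \qquad 0 \leq s \leq t \leq 1,
\]
which is the core simplification. This produces the cleaner representation
\[
u(t) = \int_0^1 G(t,s) h(s)\,ds + A\, t^2.
\]

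To determine $A$, I would multiply the above identity by $g(t)$ and integrate over $[0,1]$, swap the order of integration by Fubini, and use the definition $\mu = \int_0^1 t^2 g(t)\,dt$ to obtain the scalar equation
\[
A = \int_0^1 g(\tau) \left[\int_0^1 G(\tau,s) h(s)\,ds\right] d\tau + \mu A.
\]
Hypothesis (H2) guarantees $1-\mu > 0$, so this linear equation in $A$ is uniquely solvable; plugging the resulting value of $A$ back into the representation for $u(t)$ gives the formula claimed in the lemma. Uniqueness follows from the construction, and a brief check by differentiating the integral formula three times confirms that $u$ indeed satisfies \eqref{eq3}-\eqref{eq4}.

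The step I anticipate as the main obstacle is the Green's function identification: the algebraic factorization displayed above, together with careful bookkeeping when recombining the pieces of the split integral, is what makes the final kernel \eqref{eq5} fall out in the stated compact form. Everything else reduces to routine integration and the solution of a single scalar linear equation.
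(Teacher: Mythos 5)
Your proposal is correct and follows essentially the same route as the paper: integrate three times, use $u(0)=u'(0)=0$ to kill two constants, split the integral and apply the identity $t^2(1-s)^2-(t-s)^2=s(1-t)(2t-ts-s)$ to extract $G(t,s)$, then solve the scalar equation for $u(1)=\int_0^1 g(s)u(s)\,ds$ using $1-\mu\neq 0$ from (H2). Your unknown $A$ is exactly the paper's $u(1)$, so the two arguments coincide step for step.
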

\begin{proof}
Rewriting \eqref{eq3} as $u^{\prime \prime \prime}(t)=-h(t)$ and integrating three times over the interval $ [0, t]$ for $ t \in [0,1] $, we obtain
$$ u^{\prime \prime}(t) = - \int_{0}^{t} h(s) ds + A, $$
$$ u^{\prime}(t) = - \int_{0}^{t} (t-s)h(s) ds + A t + B, $$
$$ u(t) = -\frac{1}{2} \int_{0}^{t}( t- s)^{2} h(s) ds +\frac{1}{2} A t^{2} + B t + C, $$
where $A, B, C \in \mathbb{R}$ are constants.
By \eqref{eq4}, we get $ B = C = 0$ and

$$ \frac{1}{2} A = \frac{1}{2} \int_{0}^{1} (1-s)^{2} h(s) ds + u(1). $$
Then
\begin{gather*}
\begin{aligned}
u(t)& =- \frac{1}{2}\int_0 ^t (t-s)^{2} h(s)ds +\frac{t^2}{2}\int_0 ^1(1-s)^{2}h(s)ds + t^{2} u(1) \\
&  =- \frac{1}{2}\int_0 ^t (t-s)^{2} h(s)ds +\frac{t^2}{2}\int_0 ^t(1-s)^{2}h(s)ds +\frac{t^2}{2}\int_t ^1(1-s)^{2}h(s)ds + t^{2} u(1) \\
&=\frac{1}{2}\int_0 ^t[ t^{2}(1-s)^{2} - (t-s )^{2}] h(s)ds +\frac{t^2}{2}\int_t ^1(1-s)^{2}h(s)ds + t^{2} u(1)\\
&=\frac{1}{2}\int_0 ^t s(1-t)(2t- ts -s) h(s)ds +\frac{t^2}{2}\int_t ^1(1-s)^{2}h(s)ds + t^{2} u(1).
\end{aligned}
\end{gather*}

 So
 \begin{equation} \label{eq6}
   u(t)= \int_0 ^1 G(t,s)h(s) ds + t^{2}u(1).
 \end{equation}
 Multiplying \eqref{eq6} by $g(t)$ and integrating the result over the interval $[0,1]$, we obtain
\begin{equation*}
 u(1) =\int_{0}^{1} g(\tau) u(\tau) d\tau = \int_{0}^{1} g(\tau) \Big(\int_0 ^1 G(\tau,s)h(s) ds \Big) d\tau + u(1) \int_{0}^{1} \tau^{2} g(\tau)d\tau,
  \end{equation*}
which implies
\begin{equation*}
 u(1) =\frac{1}{1-\mu} \int_{0}^{1} g(\tau) \left[ \int_{0}^{1} G(\tau,s) h(s)ds \right] d\tau.
\end{equation*}
Replacing this expression in \eqref{eq6}, we get
$$ u(t) = \int_{0}^{1} G(t,s) h(s)ds + \frac{t^2}{1-\mu} \int_{0}^{1} g(\tau) \left[ \int_{0}^{1} G(\tau,s) h(s)ds \right] d\tau.$$
\end{proof}

\begin{lemma}\label{l2} $ G(t,s)$ defined by \eqref{eq5} satisfies
\begin{itemize}
\item[(i)] $ G(t,s) \geq 0 $, for all \ $ t, s \in [0,1];$
\item[(ii)] $ \rho (t) s (1-s)^{2} \leq G(t,s) \leq s(1-s)^{2},$ \  for all \ $ t,s \in [0,1]$,
 where
\begin{equation}\label{eq7}
\rho(t)=\frac{1}{2}\min\{t^{2}, t(1-t)\}=\frac{1}{2}\begin{cases} t^{2},& t\leq \frac{1}{2};  \\
t(1-t), & t \geq \frac{1}{2}.
\end{cases}
\end{equation}
 \item[(iii)] Let $\theta\in ]0,\frac{1}{2}[ $ be fixed. Then\\
$\frac{\theta^{2}}{2}s(1-s)^{2} \leq G(t,s) \leq s(1-s)^{2},$  \ for all \  $(t,s) \in [\theta , 1- \theta] \times [0,1].$
 \end{itemize}
\end{lemma}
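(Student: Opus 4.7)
The plan is to verify each claim by separately analyzing the two branches of $G(t,s)$ given in \eqref{eq5}, reducing every assertion to elementary polynomial inequalities.

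For (i), nonnegativity is immediate on the branch $t \leq s$, since $G(t,s) = \frac{1}{2}(1-s)^2 t^2$. On the branch $s \leq t$, the only factor whose sign is not transparent is $2t - ts - s$; I would rewrite it as $t(2-s) - s$ and use $t \geq s$ together with $2 - s \geq 1$ to conclude $t(2-s) \geq t \geq s$, so $G(t,s) \geq 0$.

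For the two-sided estimate in (ii) I would again treat each branch separately. On $t \leq s$, since $\rho(t) \leq \tfrac{t^2}{2}$ by definition of \eqref{eq7} and $s \leq 1$, the lower bound $\rho(t)\,s(1-s)^2 \leq \tfrac{t^2}{2}(1-s)^2 = G(t,s)$ is immediate; the upper bound follows from $t^2 \leq s$ (since $t \leq s \leq 1$), which gives $G(t,s) \leq \tfrac{1}{2}s(1-s)^2 \leq s(1-s)^2$. On $s \leq t$, the upper bound comes from $2t - ts - s = t(2-s) - s \leq (2-s) - s = 2(1-s)$ (equivalent to $t \leq 1$), combined with $1-t \leq 1-s$, producing $G(t,s) \leq s(1-t)(1-s) \leq s(1-s)^2$. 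For the lower bound on this branch I would use $2t - ts - s \geq t - ts = t(1-s)$ (equivalent to $t \geq s$), so that $G(t,s) \geq \tfrac{1}{2}\,s t(1-t)(1-s)$, and then absorb $\rho(t) \leq \tfrac{t(1-t)}{2}$ and $1-s \leq 1$ to finish.

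Claim (iii) I would deduce directly from (ii) by verifying that $\rho(t) \geq \tfrac{\theta^2}{2}$ on $[\theta, 1-\theta]$. On $[\theta, \tfrac{1}{2}]$ this is immediate from $\rho(t) = \tfrac{t^2}{2}$; on $[\tfrac{1}{2}, 1-\theta]$ the function $\tfrac{t(1-t)}{2}$ is decreasing, so its minimum is attained at $t = 1-\theta$ with value $\tfrac{\theta(1-\theta)}{2} \geq \tfrac{\theta^2}{2}$ (using $1-\theta \geq \theta$ since $\theta \leq 1/2$); the upper bound in (iii) is already contained in (ii). The whole proof is a sequence of short elementary inequalities, with no real obstacle; the only step that requires a moment of thought is choosing the bound $2t - ts - s \geq t(1-s)$ on the branch $s \leq t$, since it is precisely this inequality that produces a factor $t(1-t)$ matching the definition of $\rho(t)$.
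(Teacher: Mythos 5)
Your proof is correct and follows essentially the same route as the paper's: a case split on the two branches of $G$, with the key step on the branch $s\le t$ being the bound $2t-ts-s\ge t(1-s)$ (the paper writes this as dropping the $(t-s)$ term from the identity $2t-ts-s=(t-s)+t(1-s)$), and the final comparison with $\rho(t)=\tfrac12\min\{t^2,t(1-t)\}$. Your verification of (iii) is slightly more explicit than the paper's, which merely asserts it follows from (ii), but the content is identical.
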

\begin{proof}
{\rm (i)} Since it is obvious for $ t \leq s $, we only need to prove the case $s \leq t $.\\
Assume that $ s \leq t $ , then
\begin{equation}\label{eq8}
\begin{split}
 G(t,s) &= \frac{1}{2} s(1-t) (2t - ts -s )\\
  & = \frac{1}{2} s(1-t) [(t-s) + t(1-s)]\\
   & \geq 0.
 \end{split}
 \end{equation}\\
{\rm (ii)} If $ s \leq t $, then from \eqref{eq5} we have
\begin{equation}\label{eq9}
\begin{split}
 G(t,s) &= \frac{1}{2} s(1-t) (2t - ts -s )\\
  & = \frac{1}{2} s(1-t) [(t-s) + t(1-s)]\\
   & \leq  \frac{1}{2} s(1-s) [(1-s) + (1-s)]\\
   & = s(1-s)^{2}.
 \end{split}
 \end{equation}\\
 On the other hand, we have
 \begin{equation}\label{eq10}
 \begin{split}
  G(t,s) &= \frac{1}{2} s(1-t)[(t-s)+t(1-s)]\\
   & \geq \frac{1}{2} s(1-t)t (1-s)\\
    & \geq  \frac{1}{2} t(1-t) s(1-s)^{2}.\\
  \end{split}
  \end{equation}\\
If $ t \leq s $, from \eqref{eq5}, we have
   \begin{equation}\label{eq11}
   \begin{split}
    G(t,s) &= \frac{1}{2} t^{2} (1-s)^{2}  \\
     & \leq  s^{2} (1-s)^{2} \\
      & \leq s(1-s)^{2},\\
    \end{split}
    \end{equation}
    and,
   \begin{equation}\label{eq12}
      \begin{split}
       G(t,s) &= \frac{1}{2} t^{2} (1-s)^{2} \\
        & \geq  \frac{1}{2} t^{2} s (1-s)^{2}. \\
       \end{split}
       \end{equation}\\
From \eqref{eq9}, \eqref{eq10}, \eqref{eq11} and \eqref{eq12}, we have
\[ \rho (t) s (1-s)^{2} \leq G(t,s) \leq s(1-s)^{2},\ \text{for all} \ (t,s) \in [0,1]\times[0,1].\]
{\rm (iii)} It follows immediately from {\rm (ii)}.
\end{proof}

\begin{lemma}\label{l4}
Let $ h(t) \in C ([0,1],[0,\infty )) $ and $ \theta \in ]0, \frac{1}{2}[$. The unique solution  of \eqref{eq3}-\eqref{eq4} is a nonnegative and satisfies
\[ \min_{t\in [\theta, 1-\theta]} u(t) \geq \frac{\theta^2}{2}\Big(\frac{1-\mu +\beta}{1-\mu +\alpha}\Big)\|u\|,\]
where $ \mu = \int_{0}^{1} t^2 g(t)dt$,\ $\beta= \theta^2 \int_{\theta}^{1-\theta}g(t) dt$,\ $\alpha = \int_{0}^{1}g(t)dt$.
\end{lemma}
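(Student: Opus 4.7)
The plan is to start from the integral representation established in Lemma \ref{l1}, namely
\[
u(t)=\int_{0}^{1}G(t,s)h(s)\,ds+\frac{t^{2}}{1-\mu}\int_{0}^{1}g(\tau)\!\int_{0}^{1}G(\tau,s)h(s)\,ds\,d\tau,
\]
and exploit the two-sided bound on $G$ from Lemma \ref{l2} to compare $u(t)$ on $[\theta,1-\theta]$ with $\|u\|$ on $[0,1]$. Since $h\geq 0$, $g\geq 0$, and $G\geq 0$, nonnegativity of $u$ is immediate.

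For the upper bound on $\|u\|$, I would apply $G(t,s)\leq s(1-s)^{2}$ and $t^{2}\leq 1$ in the representation, which yields
\[
\|u\|\leq\Bigl(1+\frac{\alpha}{1-\mu}\Bigr)\int_{0}^{1}s(1-s)^{2}h(s)\,ds=\frac{1-\mu+\alpha}{1-\mu}\int_{0}^{1}s(1-s)^{2}h(s)\,ds.
\]
For the lower bound on $[\theta,1-\theta]$, I would use part (iii) of Lemma \ref{l2}, which gives $G(t,s)\geq\tfrac{\theta^{2}}{2}s(1-s)^{2}$ whenever $t\in[\theta,1-\theta]$, and at the same time I would restrict the outer $\tau$-integral to $[\theta,1-\theta]$ so that the same lower bound applies to $G(\tau,s)$. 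Combined with $t^{2}\geq\theta^{2}$ on $[\theta,1-\theta]$, this produces
\[
u(t)\geq\frac{\theta^{2}}{2}\int_{0}^{1}s(1-s)^{2}h(s)\,ds+\frac{\theta^{2}}{1-\mu}\cdot\frac{\theta^{2}}{2}\int_{\theta}^{1-\theta}g(\tau)\,d\tau\int_{0}^{1}s(1-s)^{2}h(s)\,ds,
\]
which simplifies, using the definition of $\beta$, to $\dfrac{\theta^{2}}{2}\cdot\dfrac{1-\mu+\beta}{1-\mu}\int_{0}^{1}s(1-s)^{2}h(s)\,ds$.

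Dividing the lower bound by the upper bound eliminates the common factor $\int_{0}^{1}s(1-s)^{2}h(s)\,ds$ and the $\tfrac{1}{1-\mu}$ denominators, leaving the stated ratio $\tfrac{\theta^{2}}{2}\cdot\tfrac{1-\mu+\beta}{1-\mu+\alpha}$. The main delicate point—and really the only thing that is not a direct substitution—is recognizing that one must replace the outer integration range $[0,1]$ by $[\theta,1-\theta]$ in the lower estimate so that $G(\tau,s)$ itself can be bounded below by $\tfrac{\theta^{2}}{2}s(1-s)^{2}$; dropping the contribution from $\tau\in[0,\theta]\cup[1-\theta,1]$ is what produces the factor $\beta$ (rather than $\alpha$) in the numerator and makes the ratio well-defined even when $\rho(\tau)$ degenerates near $\tau=0$ or $\tau=1$. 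Once that restriction is in place the remainder is bookkeeping.
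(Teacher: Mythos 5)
Your proposal is correct and follows essentially the same route as the paper's own proof: bound $\|u\|$ above via $G(t,s)\leq s(1-s)^{2}$ and $t^{2}\leq 1$, bound $u(t)$ below on $[\theta,1-\theta]$ via Lemma \ref{l2}(iii) after restricting the outer $\tau$-integral to $[\theta,1-\theta]$, and compare the two estimates. The "delicate point" you flag (truncating the $\tau$-range to produce $\beta$ in place of $\alpha$) is exactly the step taken in the paper's display \eqref{eq15}.
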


\begin{proof}
From Lemma \ref{l1} and Lemma \ref{l2}, $ u(t) $ is nonnegative for $ t \in [0,1] $, and we get
\begin{equation}\label{eq13}
\begin{split}
u(t) &= \int_{0}^{1} G(t,s) h(s)ds + \frac{t^2}{1-\mu} \int_{0}^{1}g(\tau)\left[ \int_{0}^{1} G(\tau,s) h(s) ds \right] d\tau \\
&\leq \int_{0}^{1} s(1-s)^2 \left(1+\frac{\alpha}{1-\mu}\right)h(s)ds\\
&= \frac{1-\mu +\alpha}{1-\mu}\int_{0}^{1} s(1-s)^2 h(s)ds.
\end{split}
\end{equation}
Then
\begin{equation}\label{eq14}
\|u\| \leq \frac{1-\mu +\alpha}{1-\mu} \int_{0}^{1} s (1-s)^2 h(s)ds .
\end{equation}

For $ t\in [\theta, 1-\theta] $, we have
\begin{equation} \label{eq15}
\begin{split}
u(t)&=\int_{0}^{1}G(t,s) h(s) ds + \frac{t^2}{1-\mu} \int_{0}^{1}g(\tau) \left[\int_{0}^{1}G(\tau,s) h(s)ds\right]d\tau  \\
&\geq\frac{\theta^2}{2} \int_{0}^{1} s (1-s)^2 h(s)ds + \frac{\theta^2}{1-\mu} \int_{\theta}^{1-\theta} g(\tau)\left[\int_{0}^{1}\frac{\theta^{2}}{2}s(1-s)^2 h(s)ds\right]d\tau   \\
&=\frac{\theta^2}{2}\left[ \int_{0}^{1} s (1-s)^2 h(s)ds + \frac{\beta}{1-\mu} \int_{0}^{1}  s(1-s)^2 h(s)ds\right]\\
&=\frac{\theta^2}{2} \left[ \int_{0}^{1} s(1-s)^2 \left( 1+ \frac{\beta}{1-\mu}\right)h(s)ds\right] \\
&=\frac{\theta^2}{2}\left(\frac{1-\mu +\beta}{1-\mu}\right)\int_{0}^{1} s(1-s)^2 h(s)ds.
\end{split}
\end{equation}
From \eqref{eq14} and \eqref{eq15}, we obtain $$\min_{t\in [\theta, 1-\theta]} u(t) \geq  \frac{\theta^2}{2}\left(\frac{1-\mu +\beta}{1-\mu + \alpha }\right) \|u\|.$$
\end{proof}
Let $ \theta \in ]0, \frac{1}{2}[ $, we define the cone $$K= \left\lbrace u \in C([0,1],\ \mathbb{R}),\ u \geq 0 :  \min_{t\in [\theta ,1-\theta]} u(t) \geq  \frac{\theta^2}{2}\left(\frac{1-\mu +\beta}{1-\mu + \alpha }\right) \|u\| \right\rbrace,$$
and the operator $ A :K \rightarrow C[0,1] $ by
\begin{equation}\label{eq16}
Au(t)= \int_0 ^1 G(t,s)f(u(s))ds +\frac{t^2}{1-\mu}\int_0 ^1 g(\tau)\left[ \int_0 ^1 G(\tau,s)f(u(s))ds \right]d\tau .
\end{equation}
\begin{remark}
By Lemma \ref{l1}, problem \eqref{eq1}-\eqref{eq2} has a positive solution $ u(t)$ if and only if $ u $ is a fixed point of $ A $.
\end{remark}
\begin{lemma}
The operator $ A $ defined in \eqref{eq16} is completely continuous and satisfies $ AK \subset K$ .
\end{lemma}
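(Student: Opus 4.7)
The plan is to verify the two claims separately, both resting on the Green-function estimates in Lemma \ref{l2} and standard Arzel\`a--Ascoli machinery.

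First I would show that $A(K)\subset K$. Non-negativity of $Au(t)$ on $[0,1]$ is immediate from Lemma \ref{l2}(i) together with (H1), (H2), and the fact that $1-\mu>0$. For the cone condition, the key observation is that the right-hand side of \eqref{eq16} is exactly the expression produced by Lemma \ref{l1} with $h(s)=f(u(s))\in C([0,1],[0,\infty))$. Therefore Lemma \ref{l4} applies verbatim to $Au$ and yields
\[
\min_{t\in[\theta,1-\theta]}(Au)(t)\;\ge\;\frac{\theta^2}{2}\Big(\frac{1-\mu+\beta}{1-\mu+\alpha}\Big)\|Au\|,
\]
which is exactly the membership condition for $K$. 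So nothing new has to be invented here; I would simply cite Lemmas \ref{l1}, \ref{l2}, and \ref{l4} applied to $h=f\circ u$.

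Second I would prove complete continuity via Arzel\`a--Ascoli. Let $B\subset K$ be bounded, say $\|u\|\le R$ for every $u\in B$. Since $f\in C([0,\infty),[0,\infty))$, the constant $M_R:=\max_{0\le x\le R}f(x)$ is finite. Using the upper bound $G(t,s)\le s(1-s)^2\le 1/4$ from Lemma \ref{l2}(ii), I would estimate
\[
|Au(t)|\;\le\;M_R\Big(1+\tfrac{\alpha}{1-\mu}\Big)\int_0^1 s(1-s)^2\,ds,
\]
giving a uniform bound on $\{Au:u\in B\}$. For equicontinuity, I would use that $G$ is continuous on the compact square $[0,1]\times[0,1]$ (the two pieces of \eqref{eq5} agree on the diagonal $s=t$, both equal to $\tfrac12 t^2(1-t)^2$), hence uniformly continuous. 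Thus for $t_1,t_2\in[0,1]$,
\[
|Au(t_1)-Au(t_2)|\;\le\;M_R\!\int_0^1\!|G(t_1,s)-G(t_2,s)|\,ds+\tfrac{M_R|t_1^2-t_2^2|}{1-\mu}\!\int_0^1\! g(\tau)\!\int_0^1\! G(\tau,s)\,ds\,d\tau,
\]
and both terms tend to $0$ as $|t_1-t_2|\to 0$ uniformly in $u\in B$. This gives equicontinuity, so $A(B)$ is relatively compact in $C([0,1])$.

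Finally, for continuity of $A$, if $u_n\to u$ in $C([0,1])$, then $\{u_n\}$ is uniformly bounded, so all values lie in a compact interval on which $f$ is uniformly continuous; hence $f(u_n(s))\to f(u(s))$ uniformly on $[0,1]$. Dominating $|Au_n(t)-Au(t)|$ by $\sup_s|f(u_n(s))-f(u(s))|\cdot\bigl(1+\tfrac{\alpha}{1-\mu}\bigr)\int_0^1 s(1-s)^2 ds$ yields $\|Au_n-Au\|\to 0$. The only mildly delicate point is checking the joint continuity of $G$ across the diagonal, which is needed so that $\int_0^1|G(t_1,s)-G(t_2,s)|\,ds\to 0$; everything else is routine.
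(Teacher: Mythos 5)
Your proposal is correct and follows the same route as the paper: the paper's proof is just the two sentences "From Lemma \ref{l4}, we obtain $AK\subset K$" and "By an application of Arzel\`a--Ascoli theorem $A$ is completely continuous," and you have simply supplied the standard details (including the correct check that the two branches of $G$ agree on the diagonal, both equaling $\tfrac12 t^2(1-t)^2$). No discrepancy to report.
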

\begin{proof}
From Lemma \ref{l4}, we obtain $  AK \subset K $. By an application of Arzela-Ascoli theorem $ A$ is completely continuous.
\end{proof}

For convenience, we introduce the following notations
$$ f_{0} = \lim_{u\rightarrow 0^{+}} \frac{f(u)}{u} ,\hspace{2mm} f_{\infty} = \lim_{u\rightarrow +\infty} \frac{f(u)}{u} $$

\section{Existence of positive solutions}
In this section , we will state and prove our main results.
\begin{theorem}\label{T2}
Assume that $ f_{0} = 0 $ and $ f_{\infty} = \infty $ (superlinear). Then the  problem \eqref{eq1} and \eqref{eq2} has at least one positive solution.
\end{theorem}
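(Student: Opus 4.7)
\medskip

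\noindent\textbf{Proof plan.} The plan is to apply Krasnoselskii's Theorem~\ref{T1} (case (a)) to the completely continuous operator $A:K\to K$ defined in \eqref{eq16}, choosing two concentric balls $\Omega_1\subset\Omega_2$ centered at the origin so that $A$ is norm-decreasing on the inner boundary and norm-increasing on the outer boundary. The upper estimate on the inner sphere will come from $f_0=0$ combined with the upper bound $G(t,s)\le s(1-s)^2$ from Lemma~\ref{l2}(ii); the lower estimate on the outer sphere will use $f_\infty=\infty$ together with the lower bound $G(t,s)\ge \tfrac{\theta^2}{2}s(1-s)^2$ on $[\theta,1-\theta]$ from Lemma~\ref{l2}(iii), plus the cone inequality that defines $K$.

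\medskip

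\noindent\textbf{Inner sphere (using $f_0=0$).} Set
\[
M_1 := \frac{1-\mu+\alpha}{1-\mu}\int_0^1 s(1-s)^2\,ds,
\]
and pick $\varepsilon>0$ with $\varepsilon M_1\le 1$. By $f_0=0$, there exists $r_1>0$ such that $0\le f(u)\le \varepsilon u$ for $0\le u\le r_1$. For $u\in K$ with $\|u\|=r_1$, bounding $G(t,s)\le s(1-s)^2$ in both terms of \eqref{eq16} exactly as in \eqref{eq13} gives
\[
Au(t)\le \frac{1-\mu+\alpha}{1-\mu}\int_0^1 s(1-s)^2 f(u(s))\,ds \le \varepsilon M_1 \|u\| \le \|u\|,
\]
so $\|Au\|\le\|u\|$ on $K\cap\partial\Omega_1$ with $\Omega_1=\{u\in E:\|u\|<r_1\}$.

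\medskip

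\noindent\textbf{Outer sphere (using $f_\infty=\infty$).} Write $\gamma:=\tfrac{\theta^2}{2}\cdot\tfrac{1-\mu+\beta}{1-\mu+\alpha}$, the cone constant from Lemma~\ref{l4}, and set
\[
M_2 := \frac{\theta^2}{2}\,\gamma \int_\theta^{1-\theta} s(1-s)^2\,ds.
\]
Choose $N>0$ with $N M_2\ge 1$. Since $f_\infty=\infty$, there exists $\widehat R>0$ with $f(u)\ge N u$ for all $u\ge \widehat R$. Take $r_2>\max\{r_1,\widehat R/\gamma\}$ and $\Omega_2=\{u\in E:\|u\|<r_2\}$. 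For $u\in K$ with $\|u\|=r_2$ and $s\in[\theta,1-\theta]$, the definition of $K$ yields $u(s)\ge \gamma r_2\ge \widehat R$, hence $f(u(s))\ge N u(s)\ge N\gamma\|u\|$. Fixing any $t_0\in[\theta,1-\theta]$ and discarding the nonnegative second term in \eqref{eq16}, Lemma~\ref{l2}(iii) gives
\[
\|Au\|\ge Au(t_0)\ge \frac{\theta^2}{2}\int_\theta^{1-\theta} s(1-s)^2 f(u(s))\,ds \ge N M_2\|u\|\ge \|u\|.
\]
Thus $\|Au\|\ge\|u\|$ on $K\cap\partial\Omega_2$.

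\medskip

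\noindent\textbf{Conclusion.} By Theorem~\ref{T1}(a), $A$ has a fixed point $u\in K\cap(\overline{\Omega_2}\setminus\Omega_1)$, which satisfies $r_1\le\|u\|\le r_2$ and is therefore a positive solution of \eqref{eq1}--\eqref{eq2}. The only mildly delicate point is making sure the radius $r_2$ is taken large enough that the cone bound $u(s)\ge\gamma\|u\|$ pushes $u(s)$ past the threshold $\widehat R$ on the whole subinterval $[\theta,1-\theta]$; everything else is a clean combination of the two-sided Green's function estimates in Lemma~\ref{l2} with the growth hypotheses $f_0=0$ and $f_\infty=\infty$.
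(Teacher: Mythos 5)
Your proof is correct and follows essentially the same route as the paper: Krasnoselskii's theorem in case (a), with the inner sphere handled via $f_0=0$ and the upper bound $G(t,s)\le s(1-s)^2$, and the outer sphere via $f_\infty=\infty$, the lower bound on $[\theta,1-\theta]$, and the cone inequality, with $r_2$ chosen large enough that $u(s)\ge\gamma r_2\ge\widehat R$ on $[\theta,1-\theta]$. The only cosmetic difference is that you discard the nonnegative second term of $Au$ in the lower estimate (the paper retains it, gaining the harmless extra factor $\tfrac{1-\mu+\beta}{1-\mu}$), which does not change the argument.
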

\begin{proof}
Since $ f_{0} = 0 $, there exists  $ \rho_1 > 0 $ such that $ f(u) \leq \epsilon u $, for $0 <u \leq \rho_1 $, where  $ \epsilon > 0 $ satisfies
$$\frac{\epsilon (1 - \mu + \alpha)}{1 - \mu } \leq 1 .$$
Thus, if we let $$ \Omega_1 = \{u \in C[0,1]:\|u\| < \rho_1 \} ,$$
then, for $ u \in K \cap \partial \Omega_1$, we have
\begin{equation}\label{eq17}
\begin{split}
Au(t)&= \int_0 ^1 G(t,s)f(u(s))ds +\frac{t^2}{1-\mu}\int_0 ^1 g(\tau)\left[ \int_0 ^1 G(\tau,s)f(u(s))ds \right]d\tau \\
& \leq \int_0 ^1 G(t,s)f(u(s))ds +\frac{1}{1-\mu}\int_0 ^1 g(\tau)\left[ \int_0 ^1 G(\tau,s)f(u(s))ds \right]d\tau \\
&\leq \int_0 ^1 s (1-s)^2 \epsilon u(s)ds + \frac{\alpha}{1-\mu}\int_0 ^1 s(1-s)^2 \epsilon u(s)ds\\
&\leq \frac{\epsilon (1-\mu + \alpha)}{1-\mu}\|u\| \int_0 ^1 s (1-s)^2 ds\\
&\leq \|u\|,
\end{split}
\end{equation}
which yields $$ \|Au \|\leq \|u\|,\ u \in K \cap \partial \Omega_1 .$$
On the other hand, since $ f_\infty =\infty $, there exists $ \hat{\rho}_2  > 0$ such that $ f(u)\geq \delta u$, for $ u >  \hat{\rho}_2 $, where $ \delta >0 $  is chosen so that
\begin{equation*}
\delta \frac{\theta^4}{24}\frac{(1-\mu + \beta)^2}{(1-\mu)(1-\mu + \alpha)}(1-2\theta)\left(\frac{1}{2} +\theta -\theta^2\right)\geq 1.
\end{equation*}
Let $ \rho_2 = \max \left\lbrace 2 \rho_1 ,\frac{2 \hat{\rho}_2 (1-\mu + \alpha)}{\theta^2 (1-\mu + \beta)}\right\rbrace $ and $ \Omega_2 = \left\lbrace u \in C[0,1]: \|u \| < \rho_2 \right\rbrace $. Then $ u \in K \cap \partial \Omega_2 $ implies that
\[ \min _{t\in [\theta , 1- \theta]}u(t) \geq \frac{\theta ^2}{2}\left( \frac{1-\mu +\beta}{1-\mu + \alpha}\right) \|u\|= \frac{\theta ^2}{2}\left( \frac{1-\mu +\beta}{1-\mu + \alpha}\right)\rho_2 \geq \hat{\rho}_2.\]
So, by \eqref{eq16} and for $ t \in [\theta, 1-\theta]$, we get
\begin{equation}\label{eq18}
\begin{split}
Au(t)&= \int_{0}^{1}G(t,s)f(u(s))ds + \frac{t^2}{1-\mu}\int_{0}^{1}g(\tau)\left[\int_{0}^{1}G(\tau,s)f(u(s))ds \right] d\tau\\
&\geq \frac{\theta^2}{2}\int_{\theta}^{1-\theta} s(1-s)^2 \delta u(s)ds+\frac{\theta^2}{1-\mu}\int_{\theta}^{1-\theta}g(\tau)\left[\int_{\theta}^{1-\theta}\frac{\theta^2}{2}s(1-s)^2 \delta u(s) ds \right]d\tau\\
&= \frac{\theta^2}{2}\int_{\theta}^{1-\theta} s(1-s)^2 \delta u(s)ds + \frac{\beta}{1-\mu} \int_{\theta}^{1-\theta}\frac{\theta^2}{2}s(1-s)^2 \delta u(s) ds \\
&= \frac{\theta^2}{2}\delta \left(\frac{1-\mu +\beta }{1-\mu } \right)\int_{\theta}^{1-\theta}s(1-s)^2 u(s)ds \\
&\geq   \frac{\theta^2}{2}\delta \left(\frac{1-\mu +\beta }{1-\mu } \right)\min_{t\in [\theta ,1-\theta]} u(t)\int_{\theta}^{1-\theta}s(1-s)^2 ds \\
&\geq   \frac{\theta^2}{2}\delta \left(\frac{1-\mu +\beta }{1-\mu } \right)\frac{\theta^2}{2}\left(\frac{1-\mu +\beta}{1-\mu +\alpha}\right) \frac{1}{6}(1-2\theta)\left(\frac{1}{2}+\theta - \theta^2\right) \|u\| \\
&= \delta \frac{\theta^4 (1-\mu + \beta)^2}{24(1-\mu)(1-\mu +\alpha)} (1-2\theta)\left(\frac{1}{2}+\theta -\theta^2\right) \|u\| \\
&\geq \|u\| .
\end{split}
\end{equation}
Therefore, $ \|Au\| \geq \|u\|,\ u\in K\cap \partial \Omega_2 $.\\
By Theorem \ref{T1}, the operator  $ A $ has a fixed point in $ K\cap (\overline{\Omega_{2}}  \setminus \Omega_1) $ such that $ \rho_1 \leq \|u\| \leq \rho_{2} $, which is a solution of the problem \eqref{eq1} and \eqref{eq2}.
\end{proof}

\begin{theorem} \label{T3}
Assume that $ f_0 = \infty  $ and $ f_{\infty} =0$ (sublinear). Then the  problem \eqref{eq1} and \eqref{eq2} has at least one positive solution .
\end{theorem}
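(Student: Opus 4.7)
The plan is to apply Krasnoselskii's Theorem \ref{T1} in case (b), reversing the roles of the two spheres from the superlinear proof. First, since $f_0=\infty$, for any prescribed large $M>0$ there exists $\rho_1>0$ with $f(u)\geq Mu$ for all $u\in(0,\rho_1]$. I would pick $M$ large enough that the analogue of the key inequality at the bottom of \eqref{eq18} (with $\delta$ replaced by $M$) still exceeds $1$, set $\Omega_1=\{u\in C[0,1]:\|u\|<\rho_1\}$, and run essentially the same chain of lower bounds as in \eqref{eq18}: for $u\in K\cap\partial\Omega_1$ and $t\in[\theta,1-\theta]$, Lemma \ref{l2}(iii) together with the cone bound $u(s)\geq\frac{\theta^{2}}{2}\frac{1-\mu+\beta}{1-\mu+\alpha}\|u\|$ on $[\theta,1-\theta]$ gives $Au(t)\geq\|u\|$, hence $\|Au\|\geq\|u\|$ on $K\cap\partial\Omega_1$.

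For the outer sphere I would exploit $f_\infty=0$ as follows. Choose $\eta>0$ small enough that
\[
\frac{\eta(1-\mu+\alpha)}{12(1-\mu)}\leq\frac{1}{2},
\]
pick $R>0$ with $f(u)\leq\eta u$ for $u\geq R$, and set $M^{*}=\max_{0\leq u\leq R}f(u)<\infty$ by continuity of $f$. Then $f(u)\leq\eta u+M^{*}$ for \emph{every} $u\geq 0$. I would take
\[
\rho_2\geq\max\!\left\{2\rho_1,\ R,\ \frac{M^{*}(1-\mu+\alpha)}{6(1-\mu)}\right\},\qquad \Omega_2=\{u\in C[0,1]:\|u\|<\rho_2\}.
\]
For $u\in K\cap\partial\Omega_2$, the upper estimate of Lemma \ref{l2}(ii), together with $\int_0^1 s(1-s)^2\,ds=1/12$, yields
\[
Au(t)\leq\frac{1-\mu+\alpha}{1-\mu}\int_0^1 s(1-s)^2\bigl(\eta u(s)+M^{*}\bigr)\,ds\leq\frac{\eta(1-\mu+\alpha)}{12(1-\mu)}\|u\|+\frac{M^{*}(1-\mu+\alpha)}{12(1-\mu)}\leq\|u\|,
\]
giving $\|Au\|\leq\|u\|$ on $K\cap\partial\Omega_2$.

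With both hypotheses of Theorem \ref{T1}(b) verified, $A$ has a fixed point $u\in K\cap(\overline{\Omega_2}\setminus\Omega_1)$ with $\rho_1\leq\|u\|\leq\rho_2$, which by the remark after \eqref{eq16} is a positive solution of \eqref{eq1}--\eqref{eq2}. The main delicate point is the outer-sphere step: because $f_\infty=0$ only controls $f$ at infinity, one cannot simply dominate $f(u(s))$ by $\eta u(s)$. The standard fix used above---the additive splitting $f(u)\leq\eta u+M^{*}$ on all of $[0,\infty)$, made legitimate by continuity on the compact interval $[0,R]$, together with a radius $\rho_2$ chosen large enough to absorb the constant term---is what I expect to be the only real subtlety. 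The inner-sphere step is a routine transcription of the calculation in \eqref{eq18}.
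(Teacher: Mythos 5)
Your proposal is correct, and the overall architecture (the cone $K$, the Green's function bounds of Lemma \ref{l2}, Krasnoselskii's theorem with $\|Au\|\geq\|u\|$ on the small sphere and $\|Au\|\leq\|u\|$ on the large one) coincides with the paper's. The inner-sphere step is a verbatim transcription of the paper's \eqref{eq19}. The one genuine difference is how you handle the outer sphere: the paper splits into two cases according to whether $f$ is bounded or unbounded, in the unbounded case choosing $\hat{\rho}_2$ with $f(u)\leq\eta u$ for $u>\hat{\rho}_2$ and a separate $\sigma$ controlling $f$ on $[0,\hat{\rho}_2]$ so that $f(u(s))\leq\eta\rho_2$ pointwise on $\partial\Omega_2$; you instead use the uniform additive bound $f(u)\leq\eta u+M^{*}$ on all of $[0,\infty)$ and absorb the constant term by taking $\rho_2$ large, at the cost of halving the allowed $\eta$ (your condition $\frac{\eta(1-\mu+\alpha)}{12(1-\mu)}\leq\frac12$ versus the paper's $\frac{\eta(1-\mu+\alpha)}{1-\mu}\leq 1$, the paper not exploiting the factor $\int_0^1 s(1-s)^2\,ds=\frac{1}{12}$ in fixing $\eta$). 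Your version avoids the case distinction entirely and is arguably cleaner; the paper's version gives the pointwise bound $f(u(s))\leq\eta\rho_2$ directly without needing to split the estimate into two halves. Both are standard and both close the argument, so there is no gap to report.
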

\begin{proof}
Since  $ f_{0} = \infty $, there exists $ \rho_1 >0 $ such that $ f(u) \geq \lambda u $, for $ 0 < u\leq \rho_1$,\\ where $ \lambda $ is chosen so that
\begin{equation*}
\lambda \frac{\theta^4 (1-\mu + \beta)^2}{24(1-\mu)(1-\mu +\alpha)} (1-2\theta)\left(\frac{1}{2}+\theta -\theta^2\right)\geq 1.
\end{equation*}
Thus, for $ u \in K \cap \partial \Omega_1 $ with
\begin{equation*}
\Omega_1 = \{u \in C[0,1]:\|u\| < \rho_1\} ,
\end{equation*}
and by \eqref{eq18}, we get
\begin{equation}\label{eq19}
\begin{split}
Au(t)&= \int_{0}^{1}G(t,s)f(u(s))ds  + \frac{t^2}{1-\mu}\int_{0}^{1}g(\tau)\left[\int_{0}^{1}G(\tau,s)f(u(s))ds \right] d\tau\\
&\geq \frac{\theta^{2}}{2}\int_{\theta}^{1-\theta} s(1-s)^{2} \lambda u(s)ds  + \frac{\theta^2}{1-\mu}\int_{\theta}^{1-\theta}g(\tau)\left[\int_{\theta}^{1-\theta} \frac{\theta^{2}}{2} s (1-s)^{2} \lambda u(s)ds \right] d\tau \\
&\geq \lambda \frac{\theta^4 (1-\mu + \beta)^2}{24(1-\mu)(1-\mu +\alpha)} (1-2\theta)\left(\frac{1}{2}+\theta -\theta^2\right) \|u\| \\
&\geq \|u\|.
\end{split}
\end{equation}

Then, $ Au(t) \geq \| u \| $ for $ t \in [\theta , 1- \theta] $, which implies that
\begin{equation*}
\|Au\| \geq \|u\|,\hspace{2mm}u \in K\cap \partial \Omega_1 .
\end{equation*}
Now, we construct the set  $ \Omega_2 $. We have two  possible cases :  \\
Case 1. If $ f $ is bounded. Then there exists $ L > 0 $ such that $ f(u) \leq L $.\\
Set $ \Omega_2 = \{u \in C [0,1]: \|u\| < \rho_2 \} $, where $ \rho_2 = \max \left\lbrace 2\rho_1 , \frac{L(1-\mu +\alpha)}{1-\mu}\right\rbrace $. \\ If $ u \in K \cap \partial \Omega_2 $ , similar to the estimates of \eqref{eq17}, we get
\begin{equation*}
\begin{split}
Au(t) &\leq \frac{L(1-\mu + \alpha)}{1-\mu}\int_{0}^{1}s(1-s)^2 ds \\
&\leq \frac{L(1-\mu + \alpha)}{1-\mu} \leq \rho_2 = \|u\|,
\end{split}
\end{equation*}
and consequently,
$$ \|Au \| \leq \|u\|,\ u \in K \cap \partial \Omega_2 .$$
Case 2. Suppose that $ f $ is unbounded. Because $ f_{\infty} =0 $, so there exists $ \hat{\rho}_2 > 0\hspace{2mm} (\hat{\rho}_2 >\rho_1) $ such that $ f(u) \leq \eta u $ for $ u > \hat{\rho}_2 $, where  $ \eta > 0 $ satisfies \\  $$ \frac{\eta (1-\mu +\alpha)}{1-\mu}\leq 1 . $$
But, from condition \rm{(H1)}, there exists $ \sigma > 0 $ such that $ f(u) \leq \eta \sigma $, with $ 0 \leq u \leq \hat{\rho}_2 $. \\
Set  $ \Omega_2 = \{ u \in C[0,1]: \|u\| < \rho_2 \} $, where  $ \rho_2 = \max \{ \sigma , \hat{\rho}_2\} $.\\
If $ u \in K \cap \partial \Omega_2 $, then we have $ f(u) \leq \eta \rho_{2} $. Similar to the estimates  \eqref{eq17}, we obtain
\begin{equation*}
\begin{split}
Au(t)&\leq \frac{\eta \rho_2 (1-\mu + \alpha)}{1-\mu}\int_{0}^{1}s(1-s)^2 ds\\
&\leq \frac{\eta \rho_2 (1-\mu + \alpha)}{1-\mu} \leq \rho_2 = \|u\|.
\end{split}
\end{equation*}
This implies that $\|Au \| \leq \|u\|$, for  $ u \in K \cap \partial \Omega_2$.\\

Therefore, it follows from Theorem \ref{T1} that $ A $ has a fixed point in $ u \in K \cap( \overline{\Omega_{2}} \setminus \Omega_{1}) $, which is a positive solution of problem \eqref{eq1} and \eqref{eq2} .
\end{proof}

\section{Examples}

\begin{example}Consider the boundary value problem
\begin{equation}\label{eq20}
u^{\prime \prime \prime }(t) + u^{2} e^{u} = 0,\ t \in (0,1),
\end{equation}
\begin{equation}\label{eq21}
u(0) = u^{\prime}(0)= 0,\ u(1) = \int_{0}^{1}s^{4} u(s)ds,
\end{equation}
where $ f(u) = u^{2} e^{u} \in C( [0, \infty), [0,\infty ))$ and $ g(t)= t^{4}\geq 0$,
$ \mu = \int_{0}^{1} t^{2}  g(t)dt = \int_{0}^{1} t^{6} dt = \frac{1}{7} < 1.$ \\
We have
\begin{align*}
\lim_{u\rightarrow 0^{+}} \frac{f(u)}{u}& = \lim_{u\rightarrow 0^{+}} \frac{u^{2} e^{u} }{u} = 0,\\
\lim_{u\rightarrow +\infty } \frac{f(u)}{u} &= \lim_{u\rightarrow +\infty } \frac{u^{2} e^{u} }{u} = +\infty.
\end{align*}
Then, by Theorem \ref{T2}, the problem \eqref{eq20}-\eqref{eq21} has at least one positive solution .
\end{example}

\begin{example}Consider the boundary value problem
\begin{equation}\label{eq22}
u^{\prime \prime \prime }(t) + \sqrt{u} + \ln (1+u) = 0,\ t \in (0,1),
\end{equation}
\begin{equation}\label{eq23}
u(0) = u^{\prime}(0) = 0,\ u(1) = \int_{0}^{1} s^{6} u(s)ds,
\end{equation}
where $ g(t) = t^{6}$ and $ f(u) = \sqrt{u} + \ln (1+u) \in C( [0, \infty), [0,\infty ))$,
so $ \mu = \int_{0}^{1} s^{8} ds = \frac{1}{9} < 1$. We have
\begin{align*}
\lim_{u\rightarrow 0^{+}} \frac{f(u)}{u} &= \lim_{u\rightarrow 0^{+}} \frac{\sqrt{u} + \ln (1+u)}{u} = +\infty,\\
\lim_{u\rightarrow +\infty } \frac{f(u)}{u} &= \lim_{u\rightarrow +\infty } \frac{\sqrt{u} + \ln (1+u)}{u} = 0.
\end{align*}
From Theorem \ref{T3}, the problem \eqref{eq22}-\eqref{eq23} has at least one positive solution .
\end{example}

\end{document}